\author{Nicolae Mihalache}
\affil{Univ Paris Est Creteil, Univ Gustave Eiffel, CNRS, LAMA UMR8050, F-94010 Creteil, France}
\date{}
\theoremstyle{plain}
\newtheorem{thm}{Theorem}
\newtheorem*{sa}{Simplifying assumption}
\newtheorem{definition}[thm]{Definition}
\newtheorem{lemma}[thm]{Lemma}
\newtheorem{proposition}[thm]{Proposition}
\def\R{{\mathbb R}}
\def\D{{\mathbb D}}
\def\C{{\mathbb C}}
\def\F{{\mathcal F}}
\def\N{{\mathbb N}}
\def\Om{{\Omega}}
\def\pa{{\partial}}
\def\si{{\sigma}}
\def\al{{\alpha}}
\def\Si{{\Sigma}}
\def\th{{\theta}}
\def\ga{{\gamma}}
\def\F{{\mathcal F}}
\def\A{{\mathcal A}}
\def\pf{{\pi_\F}}
\def\se{{\subseteq}}
\def\sm{{\setminus}}
\def\es{{\emptyset}}
\def\ol{\overline}
\newcommand{\arc}[1]{%
  \setbox9=\hbox{#1}%
  \ooalign{\resizebox{\wd9}{\height}{\texttoptiebar{\phantom{A}}}\cr#1}}
\def\ra{{\rightarrow}}
\def\ve{{\varepsilon}}
\def\de{{\delta}}
\def\dist{{\mathrm {dist}}}
\def\diam{{\mathrm {diam\,}}}
\newcommand\rpro[1]{{Proposition \ref{prop#1}}}
\begin{document}

\title{On the local connectivity of attractors of Markov IFS}
\maketitle{}

\begin{abstract}We prove an extension of M. Hata's theorem \cite{HAT} for planar Markov Iterated Function Systems satisfying a strong version of the Open Set Condition. More precisely, if the attractor of such a system is connected, then it is locally connected. We construct counterexamples to show that all additional hypothesis are necessary.
\end{abstract}

J. E. Hutchinson \cite{hut81} considered finite families of contractions on a complete metric space and showed that they admit a unique invariant non-empty compact set $\A$. That is, $\A$ is the union of its images by those contractions. The collection of contractions is called an Iterated Function System (IFS) and the set $\A$ is its attractor. The attractor is therefore a self-similar set, to a degree which depends on the regularity of the contractions and of the separation of its images, formalized below.

M. Hata identified a condition for the connectedness and local connectedness of the attractor $\A$ in a separable complete metric space, see Theorem 4.6 of \cite{HAT}. A slightly weaker version of this result can be formulated as follows.
\begin{thm}
\label{thmHata}
The attractor $\A$ is connected if and only if it is locally connected.
\end{thm}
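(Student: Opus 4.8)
The plan is to reduce everything to the combinatorics of how the first-level pieces $f_i(\A)$ overlap, using self-similarity to transport that information to every scale. For a word $w=w_1\cdots w_n$ over $\{1,\dots,N\}$ write $f_w:=f_{w_1}\circ\cdots\circ f_{w_n}$ and $\A_w:=f_w(\A)$, and let $\rho\in(0,1)$ be the largest contraction ratio; then $\A=\bigcup_{|w|=n}\A_w$ for every $n$, each $\A_w$ is a continuous image of $\A$, and $\diam\A_w\le\rho^n\diam\A$. The substantial direction ``connected $\Rightarrow$ locally connected'' I would establish in the stronger form of \emph{uniform local connectivity}: for every $\ve>0$ there is $\de>0$ such that any two points of $\A$ at distance $<\de$ lie together in a connected subset of $\A$ of diameter $<\ve$. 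That property implies local connectivity directly: given $x$ in an open $U$, for $\ve$ small the union over $y\in B(x,\de)$ of such connected sets containing $x$ and $y$ is a connected neighbourhood of $x$ inside $U$. (If $\A$ is finite it is discrete and everything is trivial, so assume not.) The converse, ``locally connected $\Rightarrow$ connected'', I would obtain by the contrapositive: a disconnected attractor of injective contractions carries its separation faithfully into every cylinder $\A_w$, and a separate, easier argument then shows that no point of $\A$ can have arbitrarily small connected neighbourhoods; I do not spell this out here, as the real content is the forward implication.

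The combinatorial device is the graph $G_n$ on vertex set $\{1,\dots,N\}^n$ with an edge $u\sim v$ whenever $\A_u\cap\A_v\ne\es$. One first checks: $\A$ is connected $\iff$ $G_1$ is connected $\iff$ $G_n$ is connected for every $n$. If $G_n$ is disconnected, say its vertices split as $S\sqcup T$ with no crossing edge, then $\bigcup_{w\in S}\A_w$ and $\bigcup_{w\in T}\A_w$ are disjoint, nonempty, compact and cover $\A$, so $\A$ is disconnected. Conversely, if $\A=E\sqcup F$ with $E,F$ nonempty and compact, then for $n$ large enough that $\rho^n\diam\A<\dist(E,F)$ each level-$n$ piece lies wholly in $E$ or wholly in $F$, and colouring words by which one gives a disconnection of $G_n$; and the length-tracking argument below shows $G_1$ connected $\Rightarrow$ $G_n$ connected for all $n$. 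So from now on $\A$ is connected and every $G_n$ is connected: any two level-$n$ pieces are joined by a finite chain of pairwise intersecting level-$n$ pieces.

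The decisive step is a chain bound \emph{independent of the scale}. Since $G_1$ is connected on $N$ vertices, its graph diameter is at most $N-1$; applying $f_w$ to a shortest $G_1$-path and using $f_w(\A_a)=\A_{wa}$ and $f_w(\A_a\cap\A_b)\se\A_{wa}\cap\A_{wb}$, we find that inside any cylinder $\A_w$ its $N$ sub-pieces $\A_{w1},\dots,\A_{wN}$ are joined pairwise by a chain of at most $N$ pieces of level $|w|+1$. Hence, if two level-$n$ words $u,v$ have the same length-$(n-1)$ prefix, or length-$(n-1)$ prefixes $u',v'$ that are adjacent in $G_{n-1}$, then $\A_u$ and $\A_v$ are joined by a chain of at most $C_0:=2N$ pieces of level $n$: in the adjacent case one routes through a point of $\A_{u'}\cap\A_{v'}$, which lies in a sub-piece of $\A_{u'}$ and in a sub-piece of $\A_{v'}$, splicing the two intra-cylinder chains across one seam. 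Now fix $\ve>0$, pick $n$ with $C_0\rho^n\diam\A<\ve$, and let $\de>0$ be the least value of $\dist(\A_u,\A_v)$ over all pairs of level-$n$ words whose length-$(n-1)$ prefixes are distinct and non-adjacent in $G_{n-1}$ (positive, being a minimum over finitely many pairs of disjoint compacta; if there is no such pair, take $\de=1$). If $d(x,y)<\de$, choose level-$n$ words with $x\in\A_u$ and $y\in\A_v$; then $\dist(\A_u,\A_v)\le d(x,y)<\de$ forces the prefixes of $u$ and $v$ to coincide or be adjacent, so $x$ and $y$ lie in a chain of at most $C_0$ level-$n$ pieces whose union is connected (each piece connected, consecutive ones meeting) and has diameter $\le C_0\rho^n\diam\A<\ve$. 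This is uniform local connectivity; with a little extra care the same chains can be organised into genuine arcs, recovering also the arcwise-connectedness asserted in Hata's theorem.

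The main obstacle is precisely this uniformity. Connectedness of $G_n$ by itself only yields chains whose length may grow without bound as $n\to\infty$, which is useless for local connectivity: one would get connected sets joining nearby points but of uncontrolled diameter. The one place self-similarity is indispensable is in circumventing this: to connect two \emph{nearby} points one never needs to chain far-apart pieces, because at the appropriate scale they already sit inside a single cylinder, or inside two touching cylinders, of the previous level, so the task collapses to chaining the $N$ sub-pieces of one cylinder, which costs only $O(N)$ small pieces via the fixed level-one estimate. Turning this heuristic -- the triangle ``close points / common coarse cylinder / bounded chain'' -- into the clean quantitative statement above is where the real work lies.
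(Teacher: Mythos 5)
The paper does not actually prove this statement: Theorem \ref{thmHata} is imported from Hata \cite{HAT} (Theorem 4.6) and used as motivation, so there is no in-paper proof to compare against. Judged on its own merits, your forward direction (connected $\Rightarrow$ locally connected) is correct and complete, and it is essentially the classical argument: the intersection graphs $G_n$ of the level-$n$ pieces, the equivalence of connectedness of $\A$ with connectedness of $G_1$ (and hence of every $G_n$), and above all the scale-independent chain bound $C_0=2N$ obtained by pushing a shortest $G_1$-path into any cylinder via $f_w$ and splicing two intra-cylinder chains at a point of $\A_{u'}\cap\A_{v'}$. The choice of $\de$ as the minimal distance between level-$n$ pieces whose level-$(n-1)$ prefixes are distinct and non-adjacent is exactly the right quantifier bookkeeping, and what you prove is precisely the uniform formulation of local connectivity that the paper adopts in Definition \ref{defLc}. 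You also correctly identify the one place self-similarity is indispensable (uniformity of the chain length across scales).

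The genuine gap is the converse, which you explicitly decline to prove, and your sketch of it would fail: as literally stated, ``locally connected $\Rightarrow$ connected'' is false for attractors of general contractions. The IFS consisting of the two constant maps $f_1\equiv 0$, $f_2\equiv 1$ on $\R$ has attractor $\{0,1\}$, and the IFS $f_1(x)=\min(x,1)/2$, $f_2(x)=\min(x,1)/2+1/2$, $f_3\equiv 2$ has attractor $[0,1]\cup\{2\}$; both are locally connected in the sense of Definition \ref{defLc} but disconnected. So your parenthetical ``if $\A$ is finite it is discrete and everything is trivial'' actually conceals a counterexample to the stated biconditional rather than disposing of it, and the claim that ``no point of $\A$ can have arbitrarily small connected neighbourhoods'' in a disconnected attractor is simply not true without further hypotheses (injectivity alone does not rescue the second example, where one can replace $f_3$ by an injective contraction fixing $2$ with image in a small neighbourhood of $2$ only after adjusting the attractor, but variants persist). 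The honest conclusion is that the converse requires either the convention that $\A$ is treated as a continuum, or the additional structural hypotheses present in Hata's original Theorem 4.6; since the paper itself only ever uses the forward implication, your proof captures all the substantive content, but the ``only if'' half of the statement as written is not established and cannot be by the route you indicate.
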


T. Bedford \cite{bed86}, K. J. Falconer \cite{fal89} and D. S. Mauldin  and S. Williams~\cite{mauWil88} have considered generalizations of the IFS, loosely speaking, by allowing only some puzzle pieces in the attractor. This can be seen as a projection of a sub-shift of finite type or as a Markov chain in the first two cases, as graph-directed IFS in the third. We will adopt the first point of view in this paper, see the following section for details.

One simple example of attractor of a Markov IFS is the union of attractors of two IFS, by setting the the transition matrix as full blocks corresponding to each IFS. It is therefore easy to construct counterexamples of such type for an eventual extension of Hata's result. 

We investigate in which such Markov IFS, the direct implication of Theorem \ref{thmHata} remains valid. The first obstruction is the overlapping of two IFS. Indeed, let $C$ be the standard Cantor set. The set $C \times [0,1] \se \R^2$ is the attractor of an IFS with four contractions. If one overlaps the segment $[0,1] \times \{0\}$, the resulting Markov IFS is a connected compact set that is not locally connected.

The Open Set Condition (OSC) has been introduced by P. A. P. Moran \cite{mor46}, see Theorem III. It is widely used, see for example Section 5.2 in \cite{hut81}. Computing the Hausdorff dimension of $\A$ in the absence of this property is a real challenge. This condition rules out similar examples to the one above, but only in the plane, see Section \ref{sectCE}. The OSC states the existence of a bounded open set $U$ that is forward invariant and which has disjoint images by the contractions of the IFS. 

We need a stronger version of this condition, the Homeomorphic OSC (HOSC), that is, $U$ is a Jordan domain and each contraction of the IFS should be a homeomorphism from $\ol U$ onto its image, see Definition \ref{defOSC}. The necessity of this condition is illustrated by the example below.

The sets $C \times [0, 1/2]$ and $[0,1] \times \{1\}$ can be glued without overlapping of images of the interior of the square $[0,1]^2$, by the complex map 
$$z \mapsto \exp(2\pi z).$$
This map can be used as a coordinate change of a simple Markov IFS to produce a counter-example as above.

Once these trivial obstructions are removed, we obtain the following result.

\begin{thm}
\label{thmCiLC}
If $\A$ is the connected attractor of a planar Markov IFS that satisfies HOSC, then $\A$ is locally connected.
\end{thm}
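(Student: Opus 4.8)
The plan is to reduce local connectivity to a small‑scale, combinatorial statement about the cells of $\A$, and then to extract that statement from the planar Jordan‑domain structure furnished by HOSC. Write the system as $\{f_i\}_i$, admissibility governed by $(a_{ij})$; let $\A_i$ be the set of points of $\A$ whose address begins with $i$, so $\A=\bigcup_i\A_i$ and $\A_i=f_i\bigl(\bigcup_{j:\,a_{ij}=1}\A_j\bigr)$, and for an admissible word $w=w_0\cdots w_{n-1}$ set $\A_w=f_{w_0}\circ\cdots\circ f_{w_{n-2}}(\A_{w_{n-1}})$. Finitely many contractions give $\diam\A_w\to 0$ uniformly as $|w|\to\infty$, and by HOSC each $f_i$ is a homeomorphism of $\ol U\supseteq\A$ onto its image, so $\A_w$ is homeomorphic to the cell $\A_{w_{n-1}}$. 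A compact metric space that is, for every $\ve>0$, a finite union of connected subsets of diameter $<\ve$ is locally connected; hence it suffices to prove that each of the finitely many cells $\A_i$ is connected, for then every $\A_w$ is connected and, for $n$ large, the cells $\{\A_w:|w|=n\}$ provide the required cover of $\A$.

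It therefore remains to show every cell $\A_i$ is connected, and I claim this follows from the combinatorial statement
\[
(\ast)\qquad\text{for each state }i,\ \text{the graph on}\ S(i)=\{\,j:a_{ij}=1\,\}\ \text{with}\ j\sim k\iff\A_j\cap\A_k\ne\es\ \text{is connected.}
\]
Indeed, granting $(\ast)$ for every state, a co‑induction yields the connectedness of all cells: the children $\{\A_{wj}:j\in S(w_{n-1})\}$ of any cell are, via the homeomorphism carrying $\A_{w_{n-1}}$ onto $\A_w$, in bijection with the $\A_j$ preserving the incidences $\A_j\cap\A_k\ne\es$, so by $(\ast)$ their intersection graph is connected; iterating, for every $N$ the generation‑$N$ descendants of $\A_i$ form a finite family of pieces of arbitrarily small diameter with connected intersection graph, and a compact set with this property at every scale is connected (a disconnection with gap $d$ would split the family once the diameters drop below $d$). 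The elementary observation is that connectedness of $\A=\bigcup_i\A_i$ already forces the intersection graph on \emph{all} states to be connected; the whole content of $(\ast)$ is the passage from this global fact to the local statement on each successor set $S(i)$. This is exactly where planarity and the homeomorphism clause of HOSC are needed, and where the necessity of the hypotheses is visible: a wrapping map such as $z\mapsto\exp(2\pi z)$ turns a cell into an annulus and fuses two sub‑pieces that were disjoint, healing a disconnection of some $\A_i$ while leaving $\A$ connected but not locally connected — precisely the mechanism of the counterexamples in \rsec{CE}.

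Proving $(\ast)$ is the main obstacle. Arguing by contradiction, if $(\ast)$ fails for some $i$ then $S(i)=T\sqcup T'$ with $\bigcup_{j\in T}\A_j$ and $\bigcup_{j\in T'}\A_j$ disjoint, and pushing forward by $f_i$ gives a disconnection $\A_i=X\sqcup Y$ inside the closed Jordan domain $V_i=f_i(\ol U)$; what must be shown is that planar HOSC prevents this from coexisting with connectedness of $\A$. Since the interiors of the other cells are disjoint from $V_i$, they touch $\A_i$ only along the Jordan curve $\partial V_i$, and the claim to establish is that they cannot ``bridge'' $X$ to $Y$, so that $\A$ splits as a closed set containing $X$ and a disjoint closed set containing $Y$. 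I would fix $\eta\in(0,\dist(X,Y))$, refine to a generation $n$ all of whose cells have diameter $<\eta$ — by HOSC these are closed Jordan domains with pairwise disjoint interiors and honest Jordan‑curve boundaries, each meeting at most one of $X,Y$ — and use that, again by HOSC, two cells meet only in a tame subset of their common boundary, so the cell‑adjacency pattern is a well‑controlled planar structure amenable to the Jordan curve theorem. The heart of the argument is then a planar bookkeeping that assembles, from cell‑boundary arcs, a finite family of disjoint simple closed curves separating the cells that meet $X$ from those that meet $Y$ while being disjoint from $\A$, which disconnects $\A$ outright. It is here that the homeomorphism clause of HOSC is indispensable: a wrapped cell destroys the Jordan‑curve structure and supplies exactly the room a bridge needs, which is why the $\exp$‑construction in \rsec{CE} is connected without being locally connected. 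I expect essentially all the genuine work, and every use of the planar hypothesis, to be concentrated in this separation step; once $(\ast)$ is secured, the theorem follows by unwinding the reductions above.
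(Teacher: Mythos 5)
Your reduction of local connectivity to ``every cell $\A_i$ is connected'' (via the property~S criterion and the fact that level-$n$ cells are homeomorphic copies of first-level cells) is sound, and so is the equivalence of that statement with your combinatorial condition $(\ast)$. But the argument has two serious problems. First, $(\ast)$ is simply false under the stated hypotheses, so the statement you reduce the theorem to is strictly stronger than the theorem and cannot be proved. Nothing in HOSC or in the connectedness of $\A$ prevents a state $i$ whose successor set $S(i)=\{j,k\}$ consists of two states with $\A_j\cap\A_k=\es$; then $\A_i=f_i(\A_j)\sqcup f_i(\A_k)$ is disconnected, and the two pieces can be re-attached to the rest of $\A$ through $\pa f_i(U)$. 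Concretely, take $U$ the open unit disk, four states $N,E,S,W$ whose cells are the four closed quarter-arcs of $\pa U$ (each $f_\bullet$ maps $\ol U$ onto a thin lens along the inside of $\pa U$ and carries the complementary three-quarter arc onto the corresponding quarter arc), and a fifth state $X$ with $S(X)=\{N,S\}$, where $f_X(\ol U)$ is a Jordan domain in the middle of the disk reaching $\pa U$ only at two of the seam points between lenses; then $\A$ is a circle with two arcs attached at points --- connected and locally connected --- while $\A_X$ is two disjoint arcs and the intersection graph on $S(X)$ has no edge. (Your co-induction also quietly needs every state to occur as the last letter of arbitrarily long admissible words; for transient states like this $X$ the disconnected cell never reappears at deep levels, which is exactly why the theorem survives while $(\ast)$ fails. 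Repairing the reduction to recurrent states only returns you to the full difficulty of the theorem.)

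Second, and independently, the step you yourself identify as carrying ``essentially all the genuine work'' --- producing, from a hypothetical disconnection, a Jordan curve disjoint from $\A$ --- is not proved but only described as ``planar bookkeeping.'' The intermediate claim that two cells ``meet only in a tame subset of their common boundary'' is unjustified: HOSC only makes the open images disjoint, and $\ol{U^i}\cap\ol{U^j}$ can be an arbitrary compact subset of two Jordan curves, so no cell-boundary arcs are available to assemble. The paper's proof supplies precisely the machinery your sketch is missing: Whyburn's characterization of non-local connectivity by a sequence of disjoint continua $C_i$ converging to a nondegenerate continuum $C$ that cannot be joined to the $C_i$ by small continua; the identification of components with quasi-components in compact sets together with a Riemann-mapping argument (\rpro{Sep}) to produce analytic Jordan curves and crosscuts $\ga_i$ disjoint from $\A_M$ separating consecutive $C_i$ inside a single puzzle piece $U^\th$; and, decisively, an \emph{area} argument --- the disjoint regions between consecutive crosscuts have areas summing to at most that of $U^\th$, so one of them eventually contains no full sub-piece $U^{\th s}$, which is what forces a pulled-back crosscut to miss $\A_M$ entirely and disconnect it. Without an argument of comparable strength at this point, the proposal does not prove the theorem.
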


In the context of graph-directed IFS, Y. Zhang \cite{zha16} has studied the connectedness properties of the invariant sets when the contractions of the IFS are similitudes. 

\medskip
\textbf{Acknowledgement.} The author is grateful to Michael Barnsley for several enriching discussions and for asking the question that the main result of this article answers.

\section{Preliminaries}

Let $\F=(\R^d; f_1,f_2,\ldots,f_m)$ be an IFS, that is each $f_i:\R^d \ra \R^d$ is a contraction, that is a Lipschitz map with $\mathrm{Lip}(f_i)<1$. Let $I=\{1,\ldots,m\}$ and $\Si=I^\N$ be endowed with the product topology. 

Let $\pf:\Si\ra\R^d$ be the associated projection on the attractor $\A$ of $\F$. That is, for $\si\in\Si$
$$\pf(\si):=\lim_{n\ra\infty}f_{\si_0}\circ\ldots\circ f_{\si_n}(x),$$
which does not depend on $x\in\R^d$. It is straightforward to show that $\pf$ is continuous, that $\Si$ is compact and that $\pf(\Si)=\A$.

Let us also denote $\Si_n=I^n$ and for $\si\in\Si$, $\si_{|n}:=\si_0\si_1\ldots\si_{n-1}\in\Si_n$. $f_{\si_{|n}}$ denotes $f_{\si_0}\circ\ldots\circ f_{\si_{n-1}}$.

We consider subsets $\A_M$ of $\A$ that are projections of sub-shifts of finite type. That is, given a transition matrix $M\in M_m(\{0,1\})$, let  
$$\Si_M=\{x_0x_1\ldots\in\Si\ :\ M_{x_ix_{i+1}}=1, \text{ for all } i\in\N\}.$$
We set $$\A_M=\pf(\Si_M).$$
We call such a system a \emph{Markov IFS}.

\begin{definition}
\label{defOSC}
Let $d=2$. We say that $\F$ satisfies the \emph{Homeomorphic Opens Set Condition (HOSC)} if there exists a Jordan domain $U$ such that its images $f_i(U)$ are disjoint, contained in $U$ and if every map $f_i$ is a homeomorphism from $\ol U$ onto its image $f_i(\ol U)$.
\end{definition}

Let us recall the following definition, see Section I.12 in \cite{why63}. 
\begin{definition}
\label{defLc}
We say that a compact set $K\se\R^d$ is locally connected if for any $\ve>0$ there exists $\de>0$ such that for all $x,y\in K$ with $\dist(x,y)<\de$ there exists a continuum $C\se K$ containing $x$ and $y$ with $\diam C<\ve$.
\end{definition}

For any set $A \in \R^n$ and $r>0$, let us denote its $r$ neighborhood by
$$B(A,r)=\{x\in\R^n\ :\ \dist(x,A) < r\}.$$
Let us recall that that the Hausdorff distance between two non-empty sets $Y,Z$ is
$$\dist_H(Y,Z)=\inf\{r>0\ :\ Y\se B(Z,r) \text{ and }Z\se B(Y,r)\}.$$

Whenever we consider the limit of a sequence of sets, we use the Hausdorff distance. The metric space of non-empty compact sets in any $\R^n$ is complete. For any compact set $K\se\R^n$, the space of non-empty compact subsets of $K$ is also compact. 

The following result is a reformulation of Theorem I.12.1 in \cite{why63}. A \emph{continuum} is a non-empty connected compact set. A degenerate \emph{continuum} has only one point.
\begin{proposition}
\label{propLC}
If a continuum $K\se\R^d$ is not locally connected, then there there are disjoint \emph{continua} $C_i\se K$ that converge to a non-de\-ge\-ne\-rate \emph{continuum} $C\se K$ and the diameter of any \emph{continuum} joining any pair of those sets in $K$ is bounded away from zero.
\end{proposition}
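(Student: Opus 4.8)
This proposition is a reformulation of Theorem I.12.1 of \cite{why63}, so the substance of the argument lies in that reference; I indicate how I would recover the statement in the form above. I would start from the negation of \rdef{Lc}: there is an $\ve>0$ such that for every $n$ one may choose $x_n,y_n\in K$ with $\dist(x_n,y_n)<1/n$ but such that no subcontinuum of $K$ of diameter $<\ve$ contains both points. By compactness of $K$, pass to a subsequence with $x_n\ra p$; then $y_n\ra p$ too. Fix a small radius, say $r=\ve/10$, and for each large $n$ let $C_n$ (resp.\ $D_n$) be the connected component of $x_n$ (resp.\ of $y_n$) in the compact set $X:=\ol B(p,r)\cap K$.

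Two elementary facts come first. (i)~$C_n\neq D_n$, since otherwise this common component would be a subcontinuum of $K$ of diameter at most $2r<\ve$ containing $x_n$ and $y_n$. (ii)~Any component of $X$ meeting the open ball $B(p,r)$ also meets the sphere $\pa B(p,r)$: otherwise it would lie inside the open ball and would produce, by the boundary bumping lemma --- concretely, by extracting from this quasi-component a set clopen in $X$ and contained in $B(p,r)$ --- a non-empty proper clopen subset of the connected set $K$, which is impossible. Hence $\diam C_n\ge r-\dist(x_n,p)\ge r/2$ for $n$ large, and similarly $\diam D_n\ge r/2$.

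Next I would extract a subsequence along which the selected continua are pairwise disjoint. The combinatorial input is that two components of $X$ are either equal or disjoint, and that a given point lies in at most one of them. So either infinitely many of the $C_n$ are distinct, hence pairwise disjoint, or, along a subsequence, all the $C_n$ coincide with one component $C^{*}$, which then contains $p$ (it is closed and $x_n\in C^{*}$, $x_n\ra p$); in the latter case the same dichotomy applied to the $D_n$ cannot end with all of them equal to one component, for that component would contain $p$ yet be disjoint from $C^{*}\ni p$ by (i), so infinitely many $D_n$ are distinct and pairwise disjoint. Relabeling, I obtain pairwise disjoint non-degenerate continua $C_i\se X$, each containing a point $t_i$ with $t_i\ra p$ and with $\diam C_i\ge r/2$. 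Since the compact subsets of the compact set $X$ form a compact space and a Hausdorff limit of subcontinua is again a subcontinuum, pass to a further subsequence with $C_i\ra C$; then $p\in C$, $C\se K$, and $\diam C=\lim\diam C_i\ge r/2>0$, so $C$ is a non-degenerate continuum.

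The last point --- a lower bound, uniform in $i,j$, for the diameter of any continuum joining $C_i$ to $C_j$ --- is the step I expect to be the main obstacle. One half is easy: if a continuum $E\se K$ meets $C_i$ and $C_j$ with $i\neq j$, then $E$ is not contained in $\ol B(p,r)$, because a connected subset of $X$ meeting the component $C_i$ is contained in $C_i$, and likewise in $C_j$, contradicting $C_i\cap C_j=\es$; so $E$ escapes the ball. To make the bound uniform in $i,j$ I would organize the extraction so that, alongside each selected continuum, its companion witness among the $x_n,y_n$ is retained, in such a way that any two of the selected $C_i$ can be completed to a forbidden pair $(x_n,y_n)$: gluing $E$ with the relevant components then yields a subcontinuum of $K$ containing $x_n$ and $y_n$, hence of diameter $\ge\ve$, which forces $\diam E\ge\ve-4r>0$. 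Arranging the selection so that this completion is always available --- equivalently, producing a genuine continuum of convergence enjoying the stated crosscut property --- is exactly the delicate bookkeeping of \cite{why63}, which I would either reproduce or simply cite.
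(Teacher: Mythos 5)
The paper itself offers no proof of this proposition --- it is stated as a direct reformulation of Theorem I.12.1 of \cite{why63} --- so your attempt to reconstruct the argument already goes beyond what the text does. Your construction is sound up to the last clause: the negation of \rdef{Lc}, the components $C_n,D_n$ of $X=K\cap\ol B(p,r)$, the boundary-bumping step (which tacitly uses $K\nsubseteq\ol B(p,r)$, true since otherwise $K$ itself would be a small continuum containing $x_n$ and $y_n$), the disjointness dichotomy, and the passage to a Hausdorff limit are all correct. The genuine gap is exactly where you flag it, and neither of your two partial arguments closes it. Knowing that a joining continuum $E$ must leave $\ol B(p,r)$ gives no lower bound on $\diam E$, because the point where $E$ meets $C_i$ may itself lie arbitrarily close to $\pa B(p,r)$. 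And the gluing argument through a ``forbidden pair'' only applies when $C_i$ and $C_j$ contain $x_n$ and $y_n$ for the \emph{same} $n$; for a generic pair the available witnesses are $x_{n_i}$ and $x_{n_j}$ with $n_i\neq n_j$, which is not a forbidden pair, and no bookkeeping in the extraction can equip every pair of selected continua with a common forbidden pair.

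The fix stays entirely inside your framework and needs only a second radius. Keep the pairwise distinct components $Q_i$ of $X=K\cap\ol B(p,r)$ containing the points $t_i\ra p$, each meeting $\pa B(p,r)$; but take $C_i$ to be the component of $t_i$ in $K\cap\ol B(p,r/2)$. Then $C_i\se Q_i$, so the $C_i$ are pairwise disjoint; boundary bumping applied inside the continuum $Q_i$ (which is not contained in $\ol B(p,r/2)$) shows that $C_i$ meets $\pa B(p,r/2)$, hence $\diam C_i\geq r/2-\dist(t_i,p)$. Now if $E\se K$ is a continuum meeting $C_i$ and $C_j$ with $\diam E<r/2$, then $E$ contains a point of $\ol B(p,r/2)$ and therefore $E\se\ol B(p,r)$, i.e.\ $E\se X$; being connected and meeting $Q_i$, it lies in the component $Q_i$ of $X$, and likewise in $Q_j$, contradicting $Q_i\cap Q_j=\es$. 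Hence $\diam E\geq r/2$ uniformly in $i,j$. Finally, the Hausdorff limit $C$ of a subsequence of the $C_i$ is a continuum of diameter at least $r/2$ contained in a single component $Q_\infty$ of $X$; discarding the at most one index with $Q_i=Q_\infty$ makes $C$ disjoint from all remaining $C_i$ (which the statement implicitly requires, since otherwise a degenerate continuum joins the pair with diameter zero), and the same two-radius argument bounds the diameter of continua joining any $C_i$ to $C$. With this adjustment your proof is complete and self-contained.
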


Let $\ga:[a,b]\ra\ol\Om$ be a path and $\Om\se\C$ a domain, such that $\ga((a,b))\se\Om$ is a Jordan arc and $\ga(a),\ga(b)\in\pa\Om$. We call such $\ga$ a \emph{crosscut} of $\Om$.

In the sequel we assume HOSC with some Jordan domain $U$.

For some $\th\in\Si_n$, let $\Si_M^\th=\{\si\in\Si_M\ :\ \si_{|n}=\th\}$ and $\A_M^\th=\pf(\Si_M^\th)$ the corresponding puzzle piece of level $n$. For $i\in I$, we observe that $\A_M^i\se f_i(\ol U)$, that $\A_M^\th$ is homeomorphic to $\A_M^{\th_{n-1}}$ and that the attractor is the union of puzzle pieces of the first level
$$\A_M=\bigcup_{i=1}^m \A_M^i.$$
Let also 
$$U^\th=f_\th(U),$$
and observe that $\A_M^\th\se\A_M\cap \ol{U^\th}$.

\section{Proof of Theorem \ref{thmCiLC}}

Suppose $\A_M$ is connected but not locally connected. By \rpro{LC}, there is a sequence of disjoint \emph{continua} $(C_i)_{i\geq 0}$ in $\A_M$ that converge to $C\se\A_M$. We will make the following:
\begin{sa}
There exists $\th\in\Si_n$ such that $C\cap\pa U^\th\neq\es$, $C\se\ol{U^\th}$ and for all $i\geq 0$, $C_i=\ol{C_i\cap U^\th}\se \A_M^\th$. For each $C_i$, $U^\th\sm C_i$ has two connected components $L_i$ and $R_i$ such that $C\se\ol{R_i}$ and for all $j\geq 0$,
$$C_j\cap U^\th\se R_i \text{ if and only if } j > i \text{ and }C_j\cap U^\th\se L_i \text{ if and only if } j < i.$$
Also, for all $i\geq 0$, there exists a crosscut $\ga_i$ of $U^\th$, $\ga_i\se \ol{R_i\cap L_{i+1}}$ disjoint from $\A_M$, which separates $C_i$ from $C_{i+1}$ in $U^\th$.
\end{sa}

Let us remark that in the above formulation, $C$ may be degenerate. As all $C_i$ separate $U^\th$, they cannot be degenerate.

For all $i\geq 0$, let $V_i$ be the component of $R_i\cap L_{i+1}$ that contains $\ga_i$. As $V_i$ are disjoint and included in $U_\th$, their respective area tends to $0$. Thus it exists $k_\th$, such that for all $k\geq k_\th$, $V_k$ cannot contain any $U^{\th s}=f_\th\circ f_s(U)$, with $s\in I$.

By the definition of $\A_M$ and the fact that $f_\th$ is a homeomorphism from $\ol U$ to $\ol{U^\th}$, we have that  $f_\th^{-1}(\A_M^\th)\se\A_M$. The difference $\A_M\sm f_\th^{-1}(\A_M^\th)$ comes from symbols that cannot follow $\th_{n-1}$, thus excluding some puzzle pieces from $\A_M^\th$, corresponding to non empty puzzle pieces of $\A_M$. 

For all $i\geq 0$, $f_\th^{-1}(C_i)\se\A_M$ and $f_\th^{-1}(C)\se\A_M$. Those non empty sets are separated in $\ol U$ by all $f_\th^{-1}(\ga_j)$ with $j\geq i$. It is therefore enough to show that there exists $k>0$ such that 
$$f_\th^{-1}(\ga_k)\cap\A_M=\es,$$ 
to prove that $\A_M$ is disconnected, a contradiction.

Indeed, let $k\geq k_\th$ and suppose there exists $s\in I$ and 
$$x\in f_\th^{-1}(\ga_k)\cap\A^s_M.$$
As $y:=f_\th(x)\in\ga_k$ which is disjoint from $\A_M^\th$, $y\in \ol{U^{\th s}}\sm \A_M^\th$, with $M_{\th_{n-1}s}=0$. As $V_k \supseteq \ga_k$ cannot contain $U^{\th s}\se U^\th$, we have either 
$$V_k\cap U^{\th s}=\es \text{ or } U^{\th s}\cap \pa V_k\neq\es.$$
Suppose for now we are in the latter case. As $V_k$ is a connected component of $U^\th\sm(C_k\cup C_{k+1})$, $\pa V_k\se C_k\cup C_{k+1}\cup\pa U^\th$. Also, $U^{\th s}\se U^\th$, therefore $U^{\th s}\cap\pa U^\th=\es$. We obtain that 
$$U^{\th s}\cap\A_M^\th\neq\es,$$
which contradicts $M_{\th_{n-1}s}=0$, because $\ol{U^{\th i}},i\in I \sm \{s\}$, are disjoint from $U^{\th s}$, by the HOSC.

The only possibility left is $V_k\cap U^{\th s}=\es$ so $y\in \pa V_k\cap \pa U^{\th s}$ and $y$ is an endpoint of $\ga_k$, therefore $y\in\pa U^\th$. Recall that $\ol {U^\th}$ is homeomorphic to $\ol U$ and thus to the closed unit disk. As $U^{\th s}$ is disjoint from $C_k\cup C_{k+1}\se\A_M$ and from $V_k$, $U^{\th s}$ is separated from $V_k$ in $U^\th$ by $C_k\cup C_{k+1}$. We conclude that 
$$y\in C_k\cup C_{k+1}\se \A_M^\th,$$
a contradiction.

\section{Proof of the Simplifying assumption}

A compact Hausdorff topological space $X$ is normal, that is, disjoint closed sets can be included in disjoint open sets. 

The \emph{connected component} of $x\in X$ (also referred as simply the component of $x$) is the union of all connected subspaces of $X$ containing $x$, a connected closed set. Connected components form a partition of the space $X$. 

The \emph{quasi-component} of $x$ is the intersection of all open and closed sets (also called \emph{clopen}) containing $x$. 

In general, we only have that the connected component of $x$ is included in its quasi-component (Theorem 6.1.22 in \cite{GT}). If the space is compact Hausdorff, we have equality. Let us cite  this result, Theorem 6.1.23 in \cite{GT}.

\begin{thm}
\label{thmQCEC}
In a compact Hausdorff space $X$, the component of a point $x\in X$ coincides with the quasi-component of the point $x$.
\end{thm}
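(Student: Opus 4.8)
The plan is to establish the two inclusions $C(x)\se Q(x)$ and $Q(x)\se C(x)$ separately, where $C(x)$ denotes the component and $Q(x)$ the quasi-component of $x$. The first is elementary and uses neither compactness nor the Hausdorff property: if $D$ is any clopen set with $x\in D$ and $Y$ is any connected set with $x\in Y$, then $Y\cap D$ and $Y\sm D$ are disjoint relatively open subsets of $Y$ covering $Y$ and the first is non-empty, so connectedness of $Y$ forces $Y\se D$; taking the union over all such $Y$ and then the intersection over all such $D$ gives $C(x)\se Q(x)$.

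For the reverse inclusion it suffices to prove that $Q(x)$ is connected, since it is then a connected set containing $x$ and hence contained in $C(x)$. I would argue by contradiction: suppose $Q(x)=A\cup B$ with $A,B$ non-empty, disjoint, and closed in $Q(x)$; since $Q(x)$ is an intersection of clopen sets it is closed in $X$, so $A$ and $B$ are closed in $X$, and we may assume $x\in A$. Here the hypothesis enters: a compact Hausdorff space is normal, so there are disjoint open sets $U\supseteq A$ and $V\supseteq B$.

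The crux, and the step I expect to be the main obstacle, is to manufacture a \emph{clopen} set $D$ with $x\in D$ and $D\se U\cup V$. The set $K:=X\sm(U\cup V)$ is closed, hence compact, and disjoint from $Q(x)$; so for every $y\in K$ there is a clopen $D_y$ with $x\in D_y$ and $y\notin D_y$, and the open sets $X\sm D_y$ cover $K$. Compactness of $K$ extracts a finite subcover, and the corresponding finite intersection $D$ is clopen, contains $x$, and is disjoint from $K$, i.e.\ $D\se U\cup V$. The final observation is that $D\cap U=D\sm V=D\cap(X\sm V)$ is simultaneously open (an intersection of open sets) and closed (an intersection of closed sets), hence a clopen neighbourhood of $x$; therefore $Q(x)\se D\cap U\se U$, which together with $B\se Q(x)$ and $U\cap V=\es$ yields $B=\es$, contradicting $B\neq\es$. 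Hence $Q(x)$ is connected, and the two inclusions give the claimed equality.
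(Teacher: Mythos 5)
Your proof is correct. The paper does not prove this statement at all --- it simply cites it as Theorem 6.1.23 of Engelking's \emph{General Topology} --- and your argument is precisely the standard one found there: the elementary inclusion of the component in the quasi-component, followed by the compactness/normality argument that extracts a clopen set $D\se U\cup V$ and observes that $D\cap U=D\sm V$ is clopen. All steps, including the key identity $D\cap U=D\cap(X\sm V)$ which makes that set closed, are justified correctly.
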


Let us prove the following separation result.

\begin{proposition}
\label{propSep}
Let $K\se \C$ be a planar compact set and $x,y\in K$ which are not contained in the same component of $K$. There exists an analytic Jordan curve disjoint from $K$ that separates $x$ and $y$.
\end{proposition}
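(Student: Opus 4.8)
The plan is to use the characterization of connected components in compact Hausdorff spaces (Theorem \ref{thmQCEC}) to first produce a clopen separation of $K$, and then to fatten the separating clopen set into a region bounded by a nice Jordan curve avoiding $K$.

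\medskip
\textbf{Step 1: From components to clopen sets.} Since $x$ and $y$ lie in different components of the compact Hausdorff space $K$, by \rthm{QCEC} they lie in different quasi-components. Hence there is a clopen subset $A \se K$ with $x \in A$ and $y \notin A$. Write $B = K \sm A$, so $A, B$ are disjoint nonempty compact sets with $A \cup B = K$, $x \in A$, $y \in B$. Let $\rho = \dist(A,B) > 0$.

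\medskip
\textbf{Step 2: Separating $A$ and $B$ by an open region in the plane.} I would take $\Om$ to be the open $\rho/3$-neighborhood $B(A, \rho/3)$ of $A$ in $\C$. Then $\ol\Om \se B(A, \rho/2)$ is disjoint from $B$, $\ol\Om \cap K = A$ (so $\pa\Om \cap K = \es$), $x \in \Om$, and $y \notin \ol\Om$. Replacing $\Om$ by the connected component of $\Om$ containing $x$, we may assume $\Om$ is a bounded domain with $x \in \Om$, $y \notin \ol\Om$, and $\pa\Om \cap K = \es$. The boundary $\pa\Om$ is a compact set at positive distance from $K$.

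\medskip
\textbf{Step 3: Replacing $\pa\Om$ by an analytic Jordan curve.} This is the main technical point. The set $\pa\Om$ separates $x$ from $y$ but need not be a Jordan curve. Since $\pa\Om$ is compact and disjoint from $K$, there is $\eta>0$ with $\dist(\pa\Om, K) > \eta$ and also $\dist(x,\pa\Om), \dist(y,\pa\Om) > \eta$. Cover a neighborhood of $\pa\Om$ by finitely many open squares of side $< \eta/10$ from a fine grid; let $P$ be the union of the closed grid squares meeting $\pa\Om$. Then $P$ is a compact polygonal region disjoint from $K$, with $x$ and $y$ in different components of $\C \sm P$ (any arc from $x$ to $y$ must cross $\pa\Om$, hence meets $P$). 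Pick the component $W$ of $\C \sm P$ containing $x$; its boundary $\pa W$ is contained in $P$, hence disjoint from $K$, and $\pa W$ separates $x$ from $y$. Now $\pa W$ is contained in a finite union of segments, but its outer boundary component is a single polygonal Jordan curve $J$ (the topological boundary of the unbounded complementary component of $W$, or more carefully: take $\widehat W$ = the union of $W$ with all bounded components of $\C\sm W$; then $\pa\widehat W$ is a Jordan polygon separating $x$ from $y$ and still disjoint from $K$ since $\pa\widehat W \se \pa W \se P$). Finally, smooth the polygonal Jordan curve: by the Riemann mapping theorem (or directly by rounding corners and applying a small analytic perturbation), there is an analytic Jordan curve $J'$ in a small neighborhood of $J$, still disjoint from $K$ and with $x$, $y$ on opposite sides. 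Concretely one may take a conformal map $\phi$ of the interior of $J$ onto $\D$ extending homeomorphically to the closure (Carathéodory), and let $J' = \phi^{-1}(\{|w| = 1-\ve\})$ for small $\ve$; this is analytic, and for $\ve$ small it stays in the $\eta$-neighborhood of $J$, hence avoids $K$, while still enclosing $x$ and excluding $y$.

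\medskip
The step I expect to be the main obstacle is Step 3: turning the abstract clopen-induced separator $\pa\Om$ into a genuine analytic \emph{Jordan} curve. The geometric heart is that a bounded open subset of the plane whose boundary avoids $K$ can be adjusted — by passing to a single complementary component and filling in holes — so that its boundary is a single Jordan curve, and the analyticity is then a routine smoothing. Care is needed to verify that at each adjustment the curve still separates $x$ from $y$ and still misses $K$, which is why keeping explicit track of the distance $\eta$ between the successive separators and $K$ is important.
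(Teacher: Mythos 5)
Your Steps 1 and 2 follow the paper's opening (quasi-components give a clopen partition $A\cup B$ of $K$, then a small neighbourhood of $A$ whose boundary misses $K$), but Step 3 contains a genuine gap at the ``fill in the holes'' move. You pass from $W$ to $\widehat W$, the union of $W$ with all bounded components of $\C\sm W$, and assert that $\pa\widehat W$ still separates $x$ from $y$. This fails whenever $y$ lies in one of those bounded components, i.e.\ whenever the clopen piece $A$ containing $x$ surrounds $y$. Concretely, take $K$ to be the unit circle together with its centre, $x$ a point of the circle and $y$ the centre: then $A$ is the circle, $\Om$ is an annulus, $W$ is an annular region, and $y$ sits in the bounded component of $\C\sm W$; your $\widehat W$ is a topological disk containing both $x$ and $y$, so its outer boundary separates nothing. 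Nothing in your argument excludes this configuration, and in this example the correct separating curve is an inner boundary circuit of $W$, not the outer one. The paper forestalls exactly this by taking neighbourhoods $U_x$ and $U_y$ of \emph{both} pieces and observing that, up to swapping $x$ and $y$, one may assume $U_y$ lies in the unbounded component of $\C\sm U_x$; only then does it fill in $U_x$ and extract a level curve of the Riemann map. You need an analogous case distinction (either swap the roles of $A$ and $B$ at the outset, or, when $y$ lies in a hole $H$ of $W$, separate with the boundary of $H$ instead).

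Two smaller points. First, the boundary of a complementary component of a finite union of closed grid squares need not be a simple closed polygon: at a grid vertex where two squares of $P$ meet only diagonally, $\ol{\widehat W}$ can touch itself and $\pa\widehat W$ degenerates to a figure-eight; this is repairable (shift the grid or round the corners) but cannot be passed over as automatic. Second, once a Jordan polygon $J$ with $J\cap K=\es$, $x$ inside and $y$ outside is in hand, your smoothing is correct and is essentially the paper's final step: $K\cap\ol{\mathrm{int}\,J}$ is a compact subset of $\mathrm{int}\,J$, so its image under the Riemann map lies in $D(0,1-\ve)$ for some $\ve>0$, and the preimage of $\pa D(0,1-\ve)$ is the desired analytic Jordan curve. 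Note, for comparison, that the paper obtains this curve directly from the filled-in neighbourhood $V_x$ of the piece containing $x$, with no polygonal intermediary, which makes the argument both shorter and immune to the grid technicalities.
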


\begin{proof}
Using the previous theorem, there are two compact sets $K_1\ni x$ and $K_2\ni y$ which form a partition of $K$. Let $d>0$ be the distance between $K_1$ and $K_2$ and $U_1$ and $U_2$ the disjoint $\frac d3$-neighborhoods of $K_1$ and respectively of $K_2$. Let $U_x$ be the component of $U_1$ containing $x$ and $U_y$ be the component of $U_1$ containing $y$.

We may assume, up to a permutation of $x$ and $y$, that $U_y$ is contained in the unbounded component $C_x$ of $U_x^c:=\C\sm U_x$. Let also $V_x$ be the \emph{filled in} $U_x$, that is $V_x=C_x^c$. As both $V_x$ and $V_x^c$ are connected, $V_c$ is simply connected.

Remark that $\pa V_x\se \pa U_x$, therefore $\pa V_x \cap K=\es$, as $\pa U_x$ is at distance $\frac d3$ from $K$. Then $K_x:=V_x \cap K$ is compact. Let $\phi:V_x\ra\D$ be a Riemann mapping. Then $\phi(K_x)$ is compact so there is $0<r<1$ such that $\phi(K_x)\se D(0,r)$, the disk of radius $r$ centered in the origin. 

Observe that $\phi^{-1}(\pa D(0,r))$ is an analytic Jordan curve, disjoint from $K$, that separates $x\in K_x$ from $C_x$, which contains $U_y\ni y$.
\end{proof}

\begin{lemma}
\label{lemCon}
If a compact set is the limit of a sequence of compact connected sets, then it is connected.
\end{lemma}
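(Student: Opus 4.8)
The plan is to prove the contrapositive: if the limit set $K = \lim_n K_n$ is disconnected, then for $n$ large each $K_n$ is disconnected, contradicting the hypothesis that every $K_n$ is connected. First I would fix the setting: since we are working with the Hausdorff distance on compact subsets of $\R^d$ (or $\C$ in the context of this paper), the limit $K$ is automatically a non-empty compact set, so the only thing that can fail is connectedness.

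Assume $K$ is disconnected. Then there exist disjoint non-empty closed (hence compact) sets $K_1, K_2$ with $K = K_1 \cup K_2$. Let $d = \dist(K_1, K_2) > 0$, and set $U_1 = B(K_1, d/3)$ and $U_2 = B(K_2, d/3)$; these are disjoint open sets covering $K$, and moreover $\dist(K, \R^d \sm (U_1 \cup U_2)) \geq d/3$. The key step is then: since $K_n \to K$ in Hausdorff distance, for all $n$ large enough we have $K_n \se B(K, d/3) \se U_1 \cup U_2$. Because $K_1$ and $K_2$ are both non-empty and $K_n$ accumulates on all of $K$ (again from $K \se B(K_n, d/3)$ for $n$ large, using that $d/3 < d/2$ so points near $K_1$ cannot lie in $U_2$ and vice versa), $K_n$ meets both $U_1$ and $U_2$. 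Hence $K_n = (K_n \cap U_1) \sqcup (K_n \cap U_2)$ is a partition of $K_n$ into two non-empty relatively open (equivalently relatively closed) subsets, so $K_n$ is disconnected — the desired contradiction.

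The only mildly delicate point, and the one I would be most careful to state correctly, is the claim that $K_n$ meets both $U_1$ and $U_2$ for large $n$: this uses the \emph{other} half of Hausdorff convergence, namely $K \se B(K_n, \varepsilon)$ for $\varepsilon < d/2$, which forces every point of $K_1$ to have a point of $K_n$ within $d/2$, and such a point cannot lie in $U_2$ since points of $U_2$ are within $d/3$ of $K_2$ and hence within $d/3 + d/2 < d$... more cleanly, one just needs $\varepsilon$ small enough that $B(K_1,\varepsilon) \se U_1$ and $B(K_2,\varepsilon)\se U_2$, e.g. $\varepsilon = d/3$ works. Everything else is routine; there is no real obstacle, and in fact this lemma is a standard fact whose proof is a two-line neighborhood argument. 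I would present it concisely in that spirit.

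\begin{proof}
Let $K_n$ be connected compact sets with $K_n \to K$ in the Hausdorff distance; then $K$ is a non-empty compact set. Suppose, for a contradiction, that $K$ is not connected, so $K = K_1 \cup K_2$ with $K_1, K_2$ non-empty, compact and disjoint. Let $d = \dist(K_1, K_2) > 0$, and put $U_1 = B(K_1, d/3)$, $U_2 = B(K_2, d/3)$. These are disjoint open sets with $K \se U_1 \cup U_2$. For $n$ large, $\dist_H(K_n, K) < d/3$, hence $K_n \se B(K, d/3) \se U_1 \cup U_2$ and also $K \se B(K_n, d/3)$. The latter, applied to any point of $K_1$, yields a point of $K_n$ in $B(K_1, d/3) = U_1$; since $U_1$ and $U_2$ are disjoint, this point lies in $U_1 \sm U_2$, so $K_n \cap U_1 \neq \es$, and symmetrically $K_n \cap U_2 \neq \es$. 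Thus $K_n = (K_n \cap U_1) \cup (K_n \cap U_2)$ is a partition of $K_n$ into two non-empty relatively open subsets, contradicting the connectedness of $K_n$. Therefore $K$ is connected.
\end{proof}
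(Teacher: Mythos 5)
Your proof is correct and follows essentially the same neighborhood argument as the paper: partition the disconnected limit into two compact pieces at positive distance $d$, take disjoint $\tfrac d3$-neighborhoods, and show a $K_n$ with $\dist_H(K_n,K)<\tfrac d3$ lies in their union while meeting both. You are slightly more careful than the paper in justifying that $K_n$ meets both neighborhoods (via $K\se B(K_n,\tfrac d3)$), but the argument is the same.
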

\begin{proof}
Suppose the limit set $K$ is disconnected, then it admits a partition into two compact sets $C_1$ and $C_2$. Let $d=\dist(C_1,C_2)>0$ be the euclidian distance between the two sets. There exists a set $K_i$ in the sequence such that the Hausdorff distance
$$\dist_H(K,K_i) < \frac d3.$$
Then $K_i\se B(C_1,\frac d3)\cup B(C_2,\frac d3)$ and it intersects both of these disjoint neighborhoods, which contradicts the fact that $K_i$ is connected.
\end{proof}

Let $A,B\se \C$ and $A$ be connected. We say that $x$ and $y$ are \emph{separated by $B$ in $A$} if they are contained in distinct components of $A\sm B$. In the sequel of this section, all considered sets are planar sets.

We will need the following separation results.

\begin{lemma}
\label{lemSep}
Let $U$ be a Jordan domain and $K\se\ol U$ a continuum. Any two disjoint connected components of $\pa U\sm K$ are separated by $K$ in $\ol U$.
\end{lemma}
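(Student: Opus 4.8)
The plan is to argue by contradiction using a separation/Jordan-curve argument in the sphere $\widehat{\C}=\C\cup\{\infty\}$. Let $P_1$ and $P_2$ be two disjoint connected components of $\pa U\sm K$; these are open sub-arcs of the Jordan curve $\pa U$. Suppose for contradiction that some pair of points $x\in P_1$, $y\in P_2$ lies in the same connected component of $\ol U\sm K$, hence in the same component after we also remove the (closed) set $K$ from the sphere. First I would pass to the complementary picture: since $\pa U\sm K$ is open in $\pa U$ and $P_1,P_2$ are two of its components, the set $K\cap\pa U$ meets $\pa U$ in a way that separates $P_1$ from $P_2$ \emph{along} the curve $\pa U$. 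Concretely, choose points $a,b\in K\cap\pa U$ lying on the two arcs of $\pa U$ that separate $P_1$ from $P_2$ (such points exist precisely because $P_1\ne P_2$ as components of $\pa U\sm K$); then $a$ and $b$ together with $x$ and $y$ are arranged cyclically on $\pa U$ in the order $x,a,y,b$.

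The core of the argument is then a topological crossing claim: in the closed Jordan domain $\ol U$, any continuum in $\ol U$ joining $x$ to $y$ must intersect any continuum in $\ol U$ joining $a$ to $b$, provided the four points sit on $\pa U$ in the cyclic order $x,a,y,b$. This is a standard consequence of the Jordan curve theorem (it is essentially the statement that a quadrilateral's two ``diagonals'' must cross inside a disk), and since $\ol U$ is homeomorphic to the closed unit disk it suffices to prove it there. Granting this, I would take a continuum $\Gamma\se\ol U\sm K$ joining $x$ to $y$ (which exists by our contradiction hypothesis: the component of $\ol U\sm K$ containing both $x$ and $y$ is, being an open-in-$\ol U$ connected set with compact closure, arcwise connected — or one can simply take the component itself as the joining continuum after noting components of open subsets of $\ol U$ are relatively open, and use local arcwise connectivity of the disk). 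On the other hand, $K$ itself is a continuum in $\ol U$ containing $a$ and $b$. By the crossing claim, $\Gamma\cap K\ne\es$, contradicting $\Gamma\se\ol U\sm K$.

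I expect the main obstacle to be the clean formulation and proof of the crossing claim, since one must be careful about whether the joining sets are allowed to touch $\pa U$ and about the degenerate cases where $K\cap\pa U$ is just two points or where $K$ touches $\pa U$ in a Cantor-like set (so that ``the two arcs separating $P_1$ from $P_2$'' must be identified correctly). A robust way to handle this is to work on the sphere: let $W$ be the component of $\widehat\C\sm\ol U$, i.e. the complement of $\ol U$, which is a Jordan domain with the same boundary $\pa U$; attach to $\Gamma$ two arcs inside $W$ — one joining $x$ to $\infty$-side appropriately — no; more simply, adjoin to $K$ the closure of the component of $\pa U\sm K$ \emph{other} than $P_1,P_2$ is not available in general. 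The cleanest route is: reduce to the disk via a homeomorphism $\ol U\to\ol\D$, then observe $K$ separates $\ol\D$ into pieces whose boundary pieces on $\pa\D$ are exactly the components of $\pa\D\sm K$; since $P_1$ and $P_2$ are distinct such components they lie on the boundary of distinct components of $\ol\D\sm K$ (because $\pa\D\sm K$ has its components partitioned among the components of $\ol\D\sm K$, each component of $\ol\D\sm K$ contributing a \emph{connected} — hence single-component — trace on $\pa\D$ by the Jordan curve theorem applied to the boundary of each piece). That last parenthetical — that each component of $\ol\D\sm K$ meets $\pa\D$ in at most one arc of $\pa\D\sm K$ — is the real content, and it follows from the Jordan curve theorem applied to $\pa$ of that component, which consists of sub-arcs of $\pa\D$ and sub-continua of $K$.
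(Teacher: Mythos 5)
Your approach is essentially the paper's: assume two points $x\in P_1$, $y\in P_2$ lie in the same component of $\ol U\sm K$, join them inside $\ol U\sm K$, and contradict the connectedness of $K$ via the Jordan curve theorem. The one soft spot is that you route the contradiction through a crossing theorem for two general continua with interlaced endpoints on $\pa U$; that statement is true (it follows from Janiszewski's theorem) but you leave it unproved, and the justification you sketch in your last paragraph --- that each component of $\ol\D\sm K$ traces a single component of $\pa\D\sm K$, to be proved by applying the Jordan curve theorem to the boundary of such a component --- is not rigorous: that trace statement is just a reformulation of the lemma itself, and the boundary of a component of $\ol\D\sm K$ need not be a Jordan curve. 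The repair is exactly the alternative you mention in passing, and it is what the paper does: since the common component of $\ol U\sm K$ is open in $\ol U$ and $\ol U$ is homeomorphic to the closed disk, hence locally arcwise connected, take $\Gamma$ to be a Jordan arc from $x$ to $y$ disjoint from $K$; then $\Gamma$ together with either of the two arcs of $\pa U$ joining $x$ to $y$ is a Jordan curve, so $\ol U\sm\Gamma$ has exactly two components, relatively open in $\ol U$, one containing $a$ and the other containing $b$. As $K$ is connected, contains both $a$ and $b$, and is disjoint from $\Gamma$, this is a contradiction --- no continuum-versus-continuum crossing result is needed, only the Jordan curve theorem and the connectedness of $K$.
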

\begin{proof}
Let $a,b\in \pa U$ be contained in disjoint components of $\pa U\sm K$, thus $K$ intersects both open Jordan arcs $\arc{ab}$ and $\arc{ba}$. As $\ol U$ is homeomorphic to the closed unit disk, it is locally arc connected. As every domain is arc connected, if $a$ and $b$ are in the same connected component of $\ol U\sm K$, then they are connected by a Jordan arc $\ga$ disjoint from $K$. 

Jordan curves $\ga\, \arc{ba}$ and $\arc{ab}\,\ol\ga$ bound the two components $U_1,U_2$ of $\ol U\sm \ga$, each containing a point of $K$. As $K\se U_1\cup U_2$, $U_1 \cap U_2=\es$ and they are open in $\ol U$, this contradicts the connectedness of $K$.
\end{proof}

\begin{lemma}
\label{lemCc}
Let $U$ be a Jordan domain and $K$ be a continuum which is not contained in $U$. Let $K':=K \cap \ol U$. Then $K' \cup \pa U$ is connected and all connected components $C$ of $K'$ intersect $\pa U$.
\end{lemma}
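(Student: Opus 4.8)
The plan is to establish first the ``component'' half of the statement --- that every connected component of $K':=K\cap\ol U$ meets $\pa U$ --- and then read off the connectedness of $K'\cup\pa U$ as an easy consequence. (If $K'=\es$ both claims are trivial, so assume $K'\neq\es$.) I would argue by contradiction. Suppose $C$ is a connected component of $K'$ with $C\cap\pa U=\es$; since $C\se K'\se\ol U$ this forces $C\se U$. As $K'$ is a closed subset of the compact set $K$, it is a compact Hausdorff (in fact metric) space, so Theorem \ref{thmQCEC} identifies $C$ with the quasi-component of any of its points; fix $c\in C$.

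The next step is to turn the fact that $c$ and the compact set $B:=K'\cap\pa U$ lie in different components of $K'$ into a \emph{single} clopen separation. For each $b\in B$ we have $b\notin C$, so --- $C$ being the quasi-component of $c$ --- there is a clopen $D_b\se K'$ with $c\in D_b$ and $b\notin D_b$; since $C$ is connected and $D_b$ is clopen, $C\se D_b$. By compactness of $B$ finitely many of the open sets $K'\sm D_b$ already cover $B$, say for $b_1,\dots,b_N$, and then $D:=\bigcap_{i=1}^N D_{b_i}$ (interpreted as $K'$ if $B=\es$) is clopen in $K'$ with $C\se D$ and $D\cap B=\es$, hence $D\cap\pa U=\es$ and so $D\se U$. (Proposition \ref{propSep} gives an alternative separation by analytic Jordan curves, but separating $c$ from the whole set $B$ that way still needs the same compactness step.)

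The crux is to promote $D$ from being clopen in $K'$ to being clopen in $K$ --- and this is exactly where the openness of $U$ enters. It is closed in $K$ because $K'=K\cap\ol U$ is closed in $K$ and $D$ is closed in $K'$. For openness, write $D=V\cap K'$ with $V\se\C$ open; since $D\se U$, a point of $K$ lies in $V\cap U$ precisely when it lies in $V\cap K'=D$ (a point of $V\cap U\cap K$ is automatically in $K\cap\ol U=K'$, and conversely $D\se V\cap K\cap U$), so $D=(V\cap U)\cap K$ is open in $K$. Thus $D$ is a nonempty clopen subset of $K$ with $D\neq K$ (any point of $K\sm U$, which exists since $K\not\se U$, lies outside $D\se U$), contradicting the connectedness of $K$. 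Hence every component of $K'$ meets $\pa U$.

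Finally, each such component $C$ together with the connected set $\pa U$ forms a connected set $C\cup\pa U$, and $K'\cup\pa U=\bigcup_C(C\cup\pa U)$ is a union of connected sets sharing the common nonempty set $\pa U$, hence connected. The only genuinely delicate point is the passage from ``clopen in $K'$'' to ``clopen in $K$'': it relies on $D\se U$ with $U$ open, and it must --- if one drops the hypothesis $K\not\se U$ then $K'=K$, the lone component need not touch $\pa U$, and the statement fails.
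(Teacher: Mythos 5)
Your proof is correct. It lives in the same circle of ideas as the paper's --- both arguments ultimately rest on Theorem \ref{thmQCEC} together with the observation that a clopen piece of $K\cap\ol U$ lying entirely inside the open set $U$ remains clopen when the rest of $K$ is put back --- but the organization is reversed and the contradiction is aimed at a different set. You prove the component statement first: you manufacture a single clopen $D\se K'$ with $C\se D\se U$ (the compactness step merging the separations $D_b$ is exactly the right way to pass from ``quasi-component'' to one clopen set), and then promote $D$ to a clopen subset of $K$ itself via $D=(V\cap U)\cap K$, contradicting the connectedness of $K$; the connectedness of $K'\cup\pa U$ then falls out as a one-line corollary from the fact that $\pa U$ is a nonempty connected set met by every component of $K'$. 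The paper instead proves the connectedness of $K'\cup\pa U$ first, by taking a compact partition $K_1\supseteq\pa U$, $K_2\se U$ and adjoining $K\sm U$ to $K_1$ so as to contradict the connectedness of $K\cup\pa U$, and then disposes of the component claim by a similar but much terser partition of $K\cup\pa U$. Your version has the merit of making fully explicit why the relevant set is \emph{open} in the larger space --- the paper's second paragraph asserts that $C$ and $(K\cup\pa U)\sm C$ form a partition into compact sets without spelling out why $C$ is open there, which is precisely the point your argument nails down --- at the cost of being longer. Your closing remark that the hypothesis $K\nsubseteq U$ is what prevents $D=K$ is also the correct diagnosis of where that hypothesis is used.
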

\begin{proof}
If $K \cap U=\es$, there is nothing to prove. Otherwise, $K \cap \pa U \neq \es$, by the connectedness of $K$.

Assume that $K' \cup \pa U$ is disconnected, then by Theorem \ref{thmQCEC}, there are two non-empty compact sets $K_1$ and $K_2$ which form a partition of $K' \cup \pa U$. As $\pa U$ is connected, we may assume that $\pa U \se K_1$, and therefore $K_2 \se U$. Then $K_1 \cup (K \sm U)$ and $K_2$ are compact sets and form a partition of the connected set $K \cup \pa U$, a contradiction.

Similarly, let $C$ be a connected component of $K'$ with $C \cap \pa U = \es$. Then $C$ and $(K \cup \pa U) \sm C$ partition of $K \cup \pa U$ into compact sets, a contradiction.
\end{proof}

\begin{proposition}
\label{propCc}
Let $U$ be a Jordan domain and $K$ a continuum that separates $x$ from $y$ in $U$. Then there exists a connected component of $K\cap\ol U$ that separates $x$ from $y$ in $U$.
\end{proposition}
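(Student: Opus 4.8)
The plan is to produce, via Zorn's lemma, a \emph{minimal} closed subset of $K':=K\cap\ol U$ that still separates $x$ from $y$ in $U$, to argue that minimality forces it to be connected, and then to pass to the connected component of $K\cap\ol U$ containing it. Let $\mathcal F$ be the family of closed sets $F\se K'$ that separate $x$ from $y$ in $U$. Since $U\sm K'=U\sm K$ and $K$ separates $x$ from $y$ in $U$ by hypothesis, $K'\in\mathcal F$, so $\mathcal F\neq\es$. For a chain $\mathcal C\se\mathcal F$, the intersection $F_\infty:=\bigcap\mathcal C$ is a non-empty compact subset of $K'$ by the finite intersection property, and it again lies in $\mathcal F$: otherwise $x$ and $y$ would lie in one component of the open planar set $U\sm F_\infty$, and hence --- open subsets of $\C$ being locally path connected --- be joined by the image of a compact arc $P\se U\sm F_\infty$; then the nested compacta $\{P\cap F\}_{F\in\mathcal C}$ would have empty intersection, so $P\cap F_0=\es$ for some $F_0\in\mathcal C$, contradicting $F_0\in\mathcal F$. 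Zorn's lemma therefore yields a minimal $F\in\mathcal F$, and $F\neq\es$ because the empty set separates no pair of points of the connected set $U$.

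The key step is that this minimal $F$ is connected. Suppose not, so $F=F_1\sqcup F_2$ with $F_1,F_2$ disjoint, non-empty and compact. By minimality of $F$, neither $F_i$ separates $x$ from $y$ in $U$, so there is a path $P_i$ from $x$ to $y$ whose image lies in $U\sm F_i$. Now work in the sphere $S^2=\C\cup\{\infty\}$ and set $E_0:=S^2\sm U$, which is closed and connected since $U$ is a Jordan domain ($E_0$ is a closed topological disk). Put $\tilde F_i:=F_i\cup E_0$, a closed subset of $S^2$. Then $\tilde F_1\cap\tilde F_2=E_0$ is connected, $\tilde F_1\cup\tilde F_2=F\cup E_0$, and $S^2\sm\tilde F_i=U\sm F_i$ contains the image of $P_i$, so $\tilde F_i$ does not separate $x$ from $y$ in $S^2$. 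By the classical Janiszewski theorem, $\tilde F_1\cup\tilde F_2$ does not separate $x$ from $y$ in $S^2$ either; since $S^2\sm(\tilde F_1\cup\tilde F_2)=U\sm F$, this places $x$ and $y$ in a single component of $U\sm F$, contradicting $F\in\mathcal F$. Hence $F$ is connected.

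To finish, $F$ is a non-empty connected subset of $K'=K\cap\ol U$, so it is contained in a unique connected component $C$ of $K\cap\ol U$; and since $F\se C$, every connected subset of $U\sm C$ is a connected subset of $U\sm F$, so from the fact that $x$ and $y$ lie in distinct components of $U\sm F$ we conclude that they lie in distinct components of $U\sm C$, i.e.\ $C$ separates $x$ from $y$ in $U$, which is what we want. The only genuinely two-dimensional point, and the expected main obstacle, is the connectedness of the minimal $F$: that is where Janiszewski's theorem enters, and the detour through $S^2$ is legitimate precisely because $S^2\sm(F\cup E_0)=U\sm F$ for every $F\se\ol U$, with the Jordan-domain hypothesis ensuring that $E_0$ is connected --- exactly the intersection hypothesis Janiszewski's theorem requires. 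One can also avoid citing that theorem by rerouting the arcs $P_1,P_2$ along the boundaries of finitely many small disjoint squares covering $F_1$ and $F_2$, at the cost of essentially reproving it.
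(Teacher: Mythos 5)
Your proof is correct, and it takes a genuinely different route from the paper. You extract, via Zorn's lemma, a minimal closed subset $F$ of $K\cap\ol U$ that still separates $x$ from $y$ in $U$ (the chain condition being handled by a clean compactness/finite-intersection argument on paths), show that minimality forces $F$ to be connected, and then pass to the component of $K\cap\ol U$ containing $F$. The connectedness step is delegated to Janiszewski's theorem, applied on $S^2$ to $\tilde F_i=F_i\cup(S^2\sm U)$; the trick of gluing the closed topological disk $S^2\sm U$ onto both pieces is exactly what makes the intersection connected and keeps the resulting join of $x$ and $y$ inside $U$, and it is where the Jordan-domain hypothesis is used. The paper instead argues directly on the geometry of $\ol U$: it reduces to the case where the components $U_x,U_y$ of $U\sm K$ reach $\pa U$ at points $a,b\notin K$, shows via Lemma \ref{lemCc} that every component of $K\cap\ol U$ meets one of the boundary arcs $\arc{ab}$, $\arc{ba}$, and then rules out the possibility that no component meets both arcs by a dichotomy on the distance between the two families of components, using the analytic separating curve of Proposition \ref{propSep} in one case and a Hausdorff-limit argument (Lemma \ref{lemCon}) in the other. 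Your argument is shorter and is the classical "minimal separators in a unicoherent space are connected" technique, but it imports Janiszewski's theorem, a nontrivial piece of plane topology on the level of the Jordan curve theorem (you rightly note it could be reproved by hand at some cost); the paper's proof is longer but self-contained modulo its own lemmas and the Riemann mapping theorem, and it produces the separating component somewhat more constructively, which is in the spirit of how the proposition is used later in the paper.
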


\begin{proof} Let $U_x$ and $U_y$ be the disjoint components of $U\sm K$ containing $x$, and respectively $y$. If $\pa U_x\se K$, then the connected component of $K':=K\cap\ol U$ containing $\pa U_x$ separates $x$ from $y$ in $U$. Therefore we may assume that both $U_x$ and $U_y$ have common boundary points with $U$, outside $K$. Let us denote two such points by $a$ and respectively $b$. Observe that $K \nsubseteq U$.

By Lemma \ref{lemCc}, every component of $K'$ intersects at least one of the Jordan arcs $\arc {ab}$ or $\arc {ba}$. If there is such a component $C$ that intersects both arcs, $x$ and $y$ are separated by $C$ in $U$ by Lemma \ref{lemSep}. We find such a component to complete the proof.

Let $K_1$ and $K_2$ be the unions of components of $K'$ that intersect $\arc {ab}$ and respectively $\arc {ba}$. Let $\ol{ab}$ be the minimal sub-arc of $\arc{ab}$ that contains $K\cap\arc{ab}$, a closed arc. Let $K_1'=K_1\cup \ol{ab}$, a connected set. Let also $K_2'$ be constructed in a similar way. 

If 
$$\dist(K_1',K_2')>0,$$
then by Proposition \ref{propSep}, there is a Jordan curve $\ga$ that separates $\ol{K_1'}$ from $\ol{K_2'}$ in the plane. Therefore $\ga$ intersects both components of $\pa U\sm K$ containing $a$ and respectively $b$. We can then find a sub-arc that connects those components inside $U$, as on a circle containing two disjoint closed sets $A$ and $B$, we can always find an arc in $(A\cup B)^c$ that connects a point of $A$ to a point of $B$. This contradicts the fact that $a$ and $b$ are not in the same component of $\ol U\sm K$.

If 
$$\dist(K_1',K_2')=0,$$
then, up to a permutation of $K_1$ and $K_2$, there is a sequence of components $C_i$ of $K_1$ that have a limit point $c$ in $K_2'$. As $C_i$ are compact, we may assume they have a limit $C\se K$, a continuum, by Lemma \ref{lemCon}. Then $c\in C$, thus the component $C'$ of $K\cap \ol U$ containing $C$ intersects $\arc{ba}$. Also, each $C_i$ contains at least a point in $\ol{ab}$, therefore $C$ intersects $\ol{ab}$. 

The continuum $C'$ intersects both arcs $\arc {ab}$ and $\arc {ba}$, which completes the proof.
\end{proof}

Remark that we can replace $x$ and $y$ in the statement of the previous proposition by any connected subsets of $U\sm K$. 

\medskip
\noindent\textbf{Global version of the simplifying assumption.} As a first step of our construction, we will drop the condition that the Jordan domain that is separated by the sequence of continua $(C_i)_{i\geq 0}$ corresponds to a puzzle piece. That is, we replace $U^\th$ by some Jordan domain $\Om$ in the statement of the \emph{simplifying assumption}.

\begin{figure}[h]
\label{figGSA}
\centering{
\caption{Global version of the simplifying assumption}
\includegraphics[width=11cm, height=8cm]{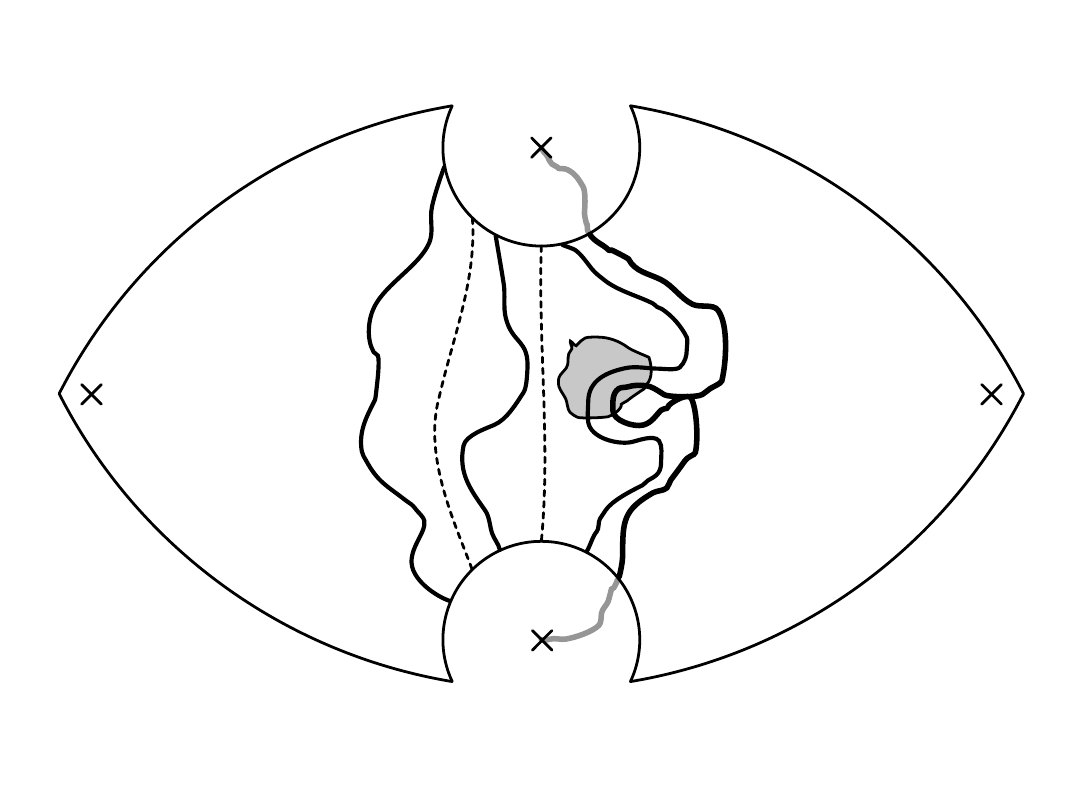} 
\setlength{\unitlength}{1mm}

\begin{picture}(-110,0)(110,0) 
\put(51,19){$b$}
\put(51,69){$a$}
\put(12,44){$c$}
\put(22,44){$L_0$}
\put(96,44){$d$}
\put(17,67){$\Om$}
\put(37,60){$C_0'$}
\put(48,37){$C_1'$}
\put(59,36.25){$C_2'$}
\put(75,50){$C'$}
\put(41,50){$\ga_0$}
\put(56,33){$\ga_1$}
\put(57,52){$U^\th$}
\put(58,69){$D_a$}
\put(53,24){$D_b$}
\end{picture}
}
\end{figure}

By Proposition \ref{propLC}, there is a sequence of continua $(C_i)_{i\geq 0}\se\A_M$ that converges to $C_\infty:=C\se\A_M$ such that $d:=\diam(C)>0$ and any two sets $C_i\neq C_j$ (including $C_\infty$), cannot be connected inside $\A_M$ by a continuum of diameter smaller than $10d$. Also, by extracting a subsequence if need be, we can assume that for all $i\geq 0$,
$$\dist_H(C_i,C)<\frac d{20}.$$

Let $a,b\in C$ be at maximal distance $\dist(a,b)=d$, $D_a=B(a,\frac d{5})$, $D_b=B(b,\frac d{5})$, $\Om'=B(a, \frac{11}{10}d)\cap B(b, \frac{11}{10}d)$ and 
$$\Om=\Om'\sm (\ol{D_a}\cup\ol{D_b}),$$
a Jordan domain.

All $C_i,i\in\ol\N:=\N \cup \{\infty\}$ are included in $\Om'$ and have common points with both $D_a$ and $D_b$. As they are connected, they all have common points with both $\pa D_a \cap \pa \Om$ and $\pa D_b \cap \pa \Om$. Let $\{c,d\}=\pa B(a, \frac{21}{20}d) \cap \pa B(b, \frac{21}{20}d)$, disjoint from all $C_i,i\in\ol\N$. 

It follows from Lemma \ref{lemSep} applied to 
$
K:=(C_i\cup\pa D_a \cup \pa D_b) \cap \ol\Om
$
that every $C_i$ separates $c$ from $d$ in $\Om$. Proposition \ref{propCc} provides a connected component $C_i'$ of $C_i\cap \ol\Om$ that separates $c$ from $d$ in $\Om$. 

Let $C_\infty'=C'\se \ol\Om$ be the limit of the sequence $(C_i')_{i\geq 0}$, a connected set by Lemma \ref{lemCon} that separates $c$ from $d$ in $\Om$ by Lemma \ref{lemSep}, as it has points in both $\pa D_a \cap \pa \Om$ and $\pa D_b \cap \pa \Om$.

We can easily obtain the following.
\begin{lemma}
\label{lemNS}
If $i,j\in\ol\N$ are distinct, then $C_i'$ is not separated from both $c$ and $d$ in $\Om$ by $C_j'$.
\end{lemma}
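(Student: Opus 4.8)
The plan is to argue by contradiction using the separation geometry already set up. Suppose $C_j'$ separates $C_i'$ from both $c$ and $d$ in $\Om$ for some distinct $i,j\in\ol\N$. Since $C_j'$ is a continuum in $\ol\Om$ with points on both $\pa D_a\cap\pa\Om$ and $\pa D_b\cap\pa\Om$, it separates $\Om$ into (at least) two pieces, one containing $c$ and one containing $d$; call $W$ the component of $\Om\sm C_j'$ that contains $C_i'$. By assumption $W$ contains neither $c$ nor $d$. But $C_i'$ also has points on $\pa D_a\cap\pa\Om$ and on $\pa D_b\cap\pa\Om$, hence $\ol W$ meets both arcs $\pa D_a\cap\pa\Om$ and $\pa D_b\cap\pa\Om$ of $\pa\Om$. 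The boundary of $W$ inside $\Om$ lies in $C_j'$, so $\pa W\se C_j'\cup\pa\Om$, and since $C_j'$ is disjoint from $\pa D_a$, $\pa D_b$ except at its two endpoints on $\pa\Om$, the portion of $\pa\Om$ in $\ol W$ is a connected sub-arc $\al$ of $\pa\Om$ whose endpoints lie on $C_j'$. The key point is that $W$ cannot reach both the $D_a$-arc and the $D_b$-arc while avoiding $c$ and $d$: the four ``sides'' of $\pa\Om$ (the two circular arcs coming from $\pa D_a$ and $\pa D_b$, and the two arcs on $\pa\Om'$) occur in cyclic order separated precisely by $c$ and $d$ on the outer part and by the endpoints of the inner arcs; a crosscut $C_j'$ from the $D_a$-side to the $D_b$-side splits $\Om$ into two halves, each of which contains exactly one of $c,d$. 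Hence the component $W$ containing $C_i'$ must contain $c$ or $d$, the desired contradiction.

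To make this precise I would work in the uniformizing disk: fix a homeomorphism $h:\ol\Om\ra\ol\D$ (which exists since $\Om$ is a Jordan domain), and track the images under $h$ of the four distinguished boundary arcs and of the points $c,d$. Then $h(C_j')$ is a crosscut of $\D$ joining a point of the $D_a$-arc to a point of the $D_b$-arc, $h(C_i')$ is a connected set touching both the $D_a$-arc and the $D_b$-arc, and $h(c),h(d)$ are the two points of $\pa\D$ that separate the $D_a$-arc from the $D_b$-arc on each side. A crosscut of the disk joining the two ``poles'' arcs has the two points $h(c),h(d)$ on opposite sides of it (one in each complementary Jordan domain), because any arc from $h(c)$ to $h(d)$ inside $\ol\D$ that stays in $\pa\D$ must pass through one of the two polar arcs, so the crosscut — which meets $\pa\D$ only at its own two endpoints, one on each polar arc — necessarily has $h(c)$ and $h(d)$ in different components of $\D\sm h(C_j')$. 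Any connected subset of $\ol\D$ meeting both polar arcs must, by Lemma \ref{lemSep} (applied with $U=\D$ and $K=h(C_j')$, whose complement in $\pa\D$ has the $D_a$-arc remnant and the $D_b$-arc remnant in distinct components), meet both sides of the crosscut or be separated from one of $h(c),h(d)$ only — in fact $h(C_i')$, touching both polar arcs, lies in the closure of a single component of $\D\sm h(C_j')$ only if that component touches both polar arcs, which forces it to contain one of $h(c),h(d)$.

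The main obstacle I anticipate is the bookkeeping of which boundary arc is which and in what cyclic order they sit on $\pa\Om$ — i.e. verifying that the geometric picture of Figure~\ref{figGSA} is correct: that $c$ and $d$ really do separate the $\pa D_a$-arc from the $\pa D_b$-arc on $\pa\Om$, and that the two ``long'' arcs of $\pa\Om$ (pieces of $\pa\Om'$) each contain exactly one of $c,d$. This rests on the explicit choice $\Om=\Om'\sm(\ol{D_a}\cup\ol{D_b})$ with $\Om'=B(a,\tfrac{11}{10}d)\cap B(b,\tfrac{11}{10}d)$, $D_a=B(a,\tfrac d5)$, $D_b=B(b,\tfrac d5)$, $\dist(a,b)=d$, and $\{c,d\}=\pa B(a,\tfrac{21}{20}d)\cap\pa B(b,\tfrac{21}{20}d)$ — a routine but slightly fiddly planar computation showing these four circles/arcs meet in the claimed pattern and that the radius $\tfrac{21}{20}d$ is between $\tfrac d5$ and $\tfrac{11}{10}d$ so that $c,d\in\pa\Om$. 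Once that combinatorial layout is fixed, the separation argument via Lemmas \ref{lemSep} and \ref{lemCon} and Theorem \ref{thmQCEC} is essentially the same two-crosscut argument already used in the proof of Proposition \ref{propCc}, so I would phrase it as briefly as possible.
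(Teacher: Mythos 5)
Your argument has a genuine gap at its central step. You claim that the component $W$ of $\Om\sm C_j'$ containing $C_i'$ must contain $c$ or $d$ because $\ol W$ meets both $\pa D_a\cap\pa\Om$ and $\pa D_b\cap\pa\Om$, on the grounds that ``a crosscut $C_j'$ from the $D_a$-side to the $D_b$-side splits $\Om$ into two halves, each of which contains exactly one of $c,d$.'' But $C_j'$ is only a continuum, not a Jordan arc: $\Om\sm C_j'$ may have many components besides those of $c$ and of $d$, and such a ``middle'' component can perfectly well have closure meeting both polar arcs while containing neither $c$ nor $d$ (picture $C_j'$ as two disjoint crosscuts from the $\pa D_a$-arc to the $\pa D_b$-arc joined by a bridge; the region between them touches both polar arcs). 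So the conclusion ``$W$ must contain $c$ or $d$'' does not follow, and the uniformization paragraph repeats the same unjustified claim. A secondary error: $c$ and $d$ lie at distance $\frac{21}{20}d$ from $a$ and $b$, hence strictly inside $\Om'$ and outside $\ol{D_a}\cup\ol{D_b}$; they are \emph{interior} points of $\Om$, not points of $\pa\Om$, so the ``four sides of $\pa\Om$ separated by $c$ and $d$'' picture you want to verify is not the actual configuration.

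What your proposal never uses — and what the paper's two-line proof rests on entirely — is that $C_i'$ itself separates $c$ from $d$ in $\Om$ (this was established by Proposition \ref{propCc} just before the lemma). The paper swaps the roles of the two continua: if $C_j'$ separated $C_i'$ from $c$ and from $d$, then $C_j'$ would have to meet the closure of the component of $c$ and the closure of the component of $d$ in $\ol\Om\sm C_i'$; these are two \emph{distinct} components precisely because $C_i'$ separates $c$ from $d$; but $C_j'$ is connected and disjoint from $C_i'$, so it lies in a single component of $\ol\Om\sm C_i'$ — contradiction. This avoids any analysis of the boundary arcs of $\Om$ altogether. To repair your proof you would need to import this separation property of $C_i'$; as written, the argument proves a false intermediate statement.
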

\begin{proof} Assume the contrary. Then $C_j'$ has points in both components of $c$ and of $d$ in $\ol\Om\sm C_i'$. As $C_i'\cap C_j'=\es$, by Lemma \ref{lemSep}, this implies that $C_j'$ is disconnected, a contradiction.
\end{proof}

We can therefore introduce a total order on $\{C_i'\ :\ i\in\ol\N\}$ and we say that $C_i'\leq C_j'$ if and only if $C_j'$ does not separate $C_i'$ from $c$ in $\Om$. As a Hausdorff limit, in between any $C_i'$ and $C'$ (either for $C_i'\leq C'$ or for $C_i'\geq C'$), there is some $C_j'$. Then, up to extracting a subsequence and permuting $c$ and $d$, we may assume that 
$$\forall i,j\in\ol\N,\ C_i'\leq C_j' \text{ if and only if }i\leq j.$$

By Proposition \ref{propSep}, for all $C_i',i\in\N$, there exist a Jordan curve $\al_i$ disjoint from $\A_M\cap \ol\Om$ that separates $C_i'$ from $C'$. Again, by extracting a subsequence if need be, we may assume that $\al_i$ separates $C_i'$ from $C_{i+1}'$. Proposition \ref{propCc} guarantees the existence of a crosscut $\ga_i$, an arc of $\al_i$, with endpoints in $\pa D_a$ and $\pa D_b$, that separates $C_i'$ from $C_{i+1}'$ in $\Om$. Let us denote by $L_i$ the component of $c$ in $\Om\sm C_i'$ and by $R_i$ the component of $d$ in $\Om\sm C_i'$.

We have proven the global version of the \emph{simplifying assumption}. We conclude the proof by showing that $\Om$ can be replaced by some $U^\th$, with $\th\in\Si_n$.

\begin{proof}[\textbf{Proof of the simplifying assumption}] 
We find some $U^\th\se\Om$ such that $\ol{U^\th}\cap C'\neq\es$ and that $C_i\cap U^\th\neq\es$ for infinitely many $i\in\N$. As $\diam(U^\th)$ converges to $0$ uniformly when the length $n$ of $\th$ goes to infinity, we can fix $n>0$ such that for all $\th\in\Si_n$,
$$\diam(U_\th)<\frac d{100}.$$
Let $L=\bigcup_{i\geq 0}L_i$. Let us recall that all $C_i'\se\A_M\se \bigcup_{\th\in\Si_n}\ol{U^\th}$ and that $\Si_n$ is finite. Also, for all $i\in\N,C_i'\se L$ and their Hausdorff limit $C'\se\pa L$. 

Therefore there exists $\th\in\Si_n$ such that $U^\th\se\Om$, $U^\th\cap L\neq\es$ and $\dist(U^\th, C')=0$, thus $\ol{U^\th}\cap C'\neq\es$. Fix $y\in C'\cap\ol{U^\th}$ and $x\in U^\th\cap L$. 
There exists $k_x$ such that $x\in L_{k_x}$, thus it is separated in $\Om$ from $y$ by all $C_i'$ with $i\geq k_x$. As $U^\th\se\Om$, the two points are also separated by all $C_i'$ with $i\geq k_x$ in $\ol{U^\th}$. Thus by Proposition \ref{propCc}, for all $i\geq k_x$, a component $K_i$ of $\ol{U^\th} \cap C_i'$ separates $x$ from $y$ in $\ol{U^\th}$. Thus $K_{i+1}$ separates $K_i$ from $y$ in $\ol{U^\th}$.

Let $K$ be the Hausdorff limit of $(K_i)_{i\geq 0}$, passing to a subsequence if necessary. By Lemma \ref{lemCon}, $K\se\ol{U^\th}\cap C'$ is connected. Similarly, by considering sub-arcs, we may assume by Proposition \ref{propCc} that $\ga_i$ are crosscuts of $U^\th$ separating $K_i$ from $K_{i+1}$ in $\ol{U^\th}$. The order on $C_i'$ is inherited by the sets $K_i$. We set $L_i'$ and $R_i'$ to be the components of $U^\th\sm K_i$ containing $K_{i-1}$ and respectively $y$.

We have proven the \emph{simplifying assumption} and, as a consequence, Theorem \ref{thmCiLC}.
\end{proof}

\section{Counterexample in $\R^3$}
\label{sectCE}

In the plane, connected sets locally separate the plane and bound open sets, and thus their area.
This fact plays a central rôle in the proof of Theorem~\ref{thmCiLC}. In higher dimension, these relations break down, and the result does not hold anymore. The following counterexample in $\R^3$ sheds a more light on the proof of Theorem \ref{thmCiLC}.  

\begin{figure}[h]
\label{figCE}
\centering{
\caption{Counterexample in $\R^3$}
\includegraphics[width=8cm, height=8cm]{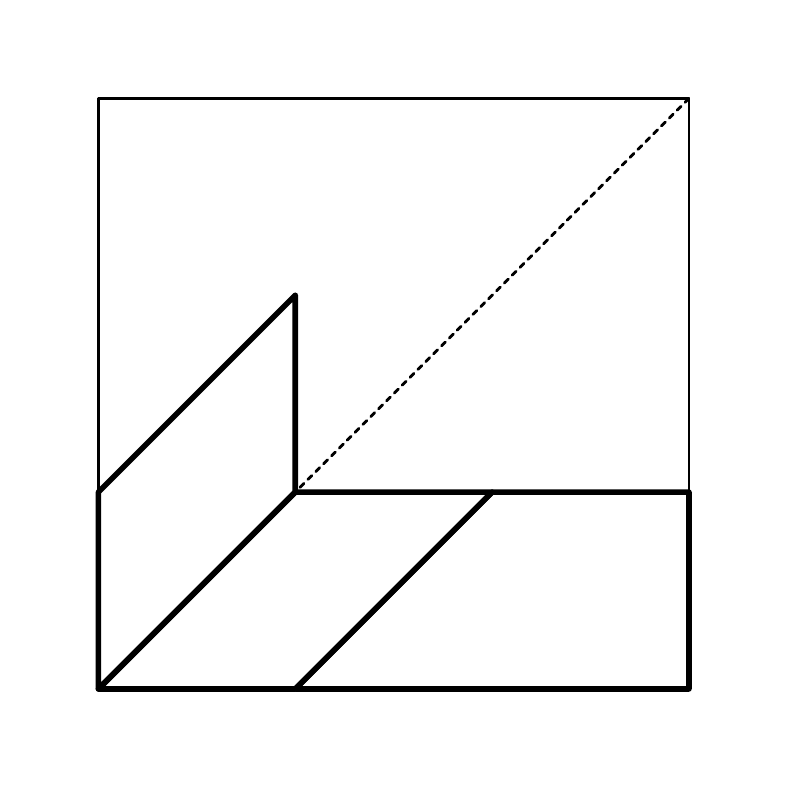} 
\setlength{\unitlength}{1mm}

\begin{picture}(-80,0)(80,0) 
\put(5,15){$0$}
\put(5,75){$b$}
\put(73,15){$a$}
\put(73,75){$a+b$}
\put(5,35){$\frac{b}3$}
\put(21,35){$\frac{a + b}3$}
\put(45,38){$\frac{2a + b}3$}
\put(73,35){$a + \frac{b}3$}
\put(29,10){$\frac{a}3$}
\put(26,58){$\frac{a + 2b}3$}
\put(14,26){$V_1$}
\put(30,26){$V_2$}
\put(53,26){$V_3$}
\end{picture}
}
\end{figure}

Let $a=(1;0)$ and $b=(0;1)$ form the canonical orthonormal basis of $\R^2$. Let $g_1,g_2\in L(\R^2)$ be linear maps such that $g_1(a)=\frac{a+b}3, g_1(b)=\frac b3$ and $g_2(a)=\frac a3, g_2(b)=\frac{a+b}3$. $g_1$ and $g_2$ are contracting homeomorphisms of $\R^2$ sending the unit square $Q=[0,1]^2$ onto $V_1$ and respectively $V_2$, domains bounded by polygons $(0,\frac b3, \frac{a+2b}3, \frac{a+b}3)$ and respectively $(0,\frac {a+b}3, \frac{2a+b}3, \frac{a}3)$, as illustrated by Figure \ref{figCE}. 

It is not hard to check that there is a contracting homeomorphism $g_3$ of the plane that sends $Q$ onto $V_3$, the interior of the polygon $(\frac a3, \frac{2a+b}3, a+\frac{b}3, a)$, vertices which are the images by $g_3$ of $0,b,a+b$ and respectively $a$. We may also assume that the restriction of $g_3$ on $\R\times\{0\}$ is affine.

Let us consider two affine maps $h_1,h_2:\R\ra\R$ such that for all $x\in\R$, $h_1(x)=\frac x2$ and $h_2(x)=\frac{x+1}2$. We can now construct contracting homeomorphisms of $\R^3$, $f_i,i=1\ldots 6$ given by $f_1=g_1\times h_1$, $f_2=g_1\times h_2$, $f_3=g_2\times h_1$, $f_4=g_3\times h_1$, $f_5=g_2\times h_2$ and $f_6=g_3\times h_2$.

We observe that contractions $f_i, i=1\ldots 6$ satisfy the $\R^3$ version of OSC, if we set $U=(0,1)^3$ the open unit cube. All contractions $f_i$ being homeomorphisms of $\R^3$ is a natural extension of working with Jordan domains in the plane, as any homeomorphism of a closed Jordan domain can be extended to a homeomorphism of the plane.

Observe that the attractor of the IFS $\{f_1,f_2\}$ is $L_1=\{(0;0)\}\times[0,1]$. Also, the attractor of $\{f_3,f_4\}$ is $L_2=[0,1]\times\{(0;0)\}$ and the attractor of $\{f_5,f_6\}$ is $L_3=[0,1]\times\{(0;1)\}$. 

Consider the Markov IFS $\F=\{\R^3;f_1,\ldots,f_6;M\}$ with transition matrix
 $$M=\left(
\begin{array}{cccccc}
1 & 1 & 0 & 0 & 0 & 0 \\
1 & 1 & 0 & 0 & 0 & 0 \\
0 & 0 & 1 & 1 & 1 & 1 \\
0 & 0 & 1 & 1 & 1 & 1 \\
0 & 0 & 0 & 0 & 1 & 1 \\
0 & 0 & 0 & 0 & 1 & 1 \\
\end{array}
\right).$$

As $\F$ contains the aforementioned IFS consisting of two maps each, it contains their respective attractors, the segments $L_1,L_2$ and $L_3$. Also, as $M_{3,5}=M_{3,6}=M_{4,5}=M_{4,6}=1$, it also contains a copy of $L_3$, more precisely $[0,1]\times \{(0;\frac 12)\}$. By induction, one can obtain that the attractor of $\F$ is
$$\A_M=L_1\cup [0,1]\times \{0\}\times \left\{0,\ldots,2^{-n},\ldots,\frac 14, \frac 12, 1\right\},$$
which is connected, but not locally connected.


\bibliography{bib}
\bibliographystyle{plain}

\end{document}